\theoremstyle{plain}
\newtheorem{theorem}{Theorem}
\newtheorem{cor}[theorem]{Corollary}
\newtheorem{prop}[theorem]{Proposition}
\theoremstyle{definition}
\newtheorem{example}[theorem]{Example}
\theoremstyle{remark}
\newcommand{\HM}{\mathrm{H\!M}}
\newcommand{\Sm}{\bf Sm/k}
\newcommand \id{\text{id}}
\newcommand \Mor{\text{Mor }}
\newcommand \Ob{\text{Ob }}
\newcommand \spe{\operatorname{Spec}}
\renewcommand \id{\operatorname{id}}
\newcommand \Hom{\operatorname{Hom}}
\newcommand \pt{\operatorname{pt}}
\renewcommand \phi\varphi
\newcommand \Z{\mathbb Z}
\title{{\bf Duality Theorem for Motives}}
\author{Panin, I., Yagunov, S.}
\begin{document}
\maketitle
\pagestyle{plain}

\section*{Introduction}
\English

Recently, due to the active study of cohomological invariants in algebraic geometry,
``transplantation" of classical topological constructions into the algebraic-geometrical ``soil"
seems to be rather important. In particular, it is very interesting to study topological properties of
the category of motives.

The notion of a motive was introduced by Alexander Grothendieck in 1964 in order to formalize notion of the universal
(co-)homology theory (see the detailed exposition of Grothendieck's ideas in~\cite{Ma}).
For us, the principal example of the considered type is a category of motives $\bf{DM^-}$,
constructed by Voevodsky~\cite{Vo2} for algebraic varieties.

Poincar\'e Duality is one of the classical and fundamental results in algebraic topology that first appeared
in his first topological memoir "Analy\-sis Situs"~\cite{Po} (as a part of the statement concerning symmetry of Betti numbers).
The proof of the general duality theorem for extraordinary cohomology theories
apparently belongs to Adams~\cite{Ad}.

The purpose of this paper is to establish the general duality theorem for the category of motives.
Essentially, we extend the main statement of~\cite{PY} to this category.
Many known results can be easily interpreted in these terms. In particular, we get a
generalization of Friedlander--Voevodsky's duality theorem~\cite{FV} to the case of arbitrary base field characteristic.
The proof of this fact, utilizing the main result of~\cite{PY}, was kindly conveyed to the authors by Andrei Suslin~\cite{Su1}.
Being inspired by his work and Dold--Puppe's~\cite{DP} category approach to the duality phenomenon in topology, we decided to write a short,
simple, and self-contained proof of the similar result for the category of motives.

It can be easily seen that our result might be considered as a purely abstract theorem and rewritten
in the spirit of ``abstract nonsense" as a statement about some category with a distinguished class of
morphisms. For the proof one, essentially, uses the existence of finite fiber-products and the terminal object
in the category of varieties, a small part of the tensor triangulated category structure for motives and, finally,
 the existence of transfers for the class of morphisms, generated by graphs of a special type (of projective morphisms).

However, we rather preferred to formulate all the statements for motives of algebraic varieties
in order to clarify geometric nature of the construction and make possible applications easier.
This yielded, in particular, to the appearing of the second index in (co-)homology, responsible for
twist with the Tate object $\Z(1)$ (see Voevodsky~\cite{Vo2}).
  The only exemption is classical Example~\ref{classic}.

\section*{Axioms and Examples}
Consider a contravariant functor $\bf M\colon \Sm\to \mathfrak M$ from the category of smooth algebraic varieties over a field $\mathbf k$
to a tensor triangulated category\footnote{In particular, we assume that the tensor structure is compatible with the shift functor in such a way that
$A[q]\otimes B=A\otimes B[q]$. A full list of axioms for triangulated category can be found in~\cite{May,Maz}.}
$\mathfrak M$, sending cartesian products to tensor products.

For the terminal object $\pt=\spe \rm\bf{k}$ of the category $\Sm$ denote the object $M(\pt)\in\Ob\mathfrak M$ by $\Z$.
From now on, we will often use implicitly the canonical isomorphisms
$$\Z\otimes M(X)\simeq M(X)\simeq M(X)\otimes \Z,$$ induced by the natural identifications $\pt\times X = X = X\times\pt$ in $\Sm$.

Let us also assume that the category $\mathfrak M$ is endowed with a fixed invertible object $\Z(1)$, called Tate object.
We will denote the $(n+1)$-fold product $M\otimes{\Z(1)\otimes\cdots\otimes\Z(1)}$ by $M(n)$.

For a variety $X$ we call an object
$M(X)$ in the category $\mathfrak M$ {\it (orientable) motive} of $X$  and the functor $\bf M$ itself
{\it (orientable) theory of motives on the category $\Sm$}, provided that the axioms below are fulfilled.

\begin{itemize}
  \item {\bf Cancelation axiom.} For every integer $q$ and arbitrary varieties $X$ and $Y$ from $\Sm$ there exists the canonical isomorphism
  $$\Hom_{\mathfrak M}(M(X),M(Y))\simeq \Hom_{\mathfrak M}(M(X)(q),M(Y)(q)).$$
  \item {\bf Transfer axiom.} Every projective equidimensional morphism $f\colon X\to Y$ of codimension $d=\dim Y-\dim X$ determines a map of
  motives $$f^!\colon M(Y)\to M(X)(d)[2d],$$  that is functorial with respect to the specified class of morphisms, {\it i.e.} $f^!(\id)=\id$ and
  $(fg)^!=g^!f^!$.
  \item {\bf Base-change axiom.} For every transversal square (see e.g.~\cite[Definition A.1]{PY}):
\begin{equation*}
\xymatrix{
X^{\prime}\ar^-{\tilde g}[r]\ar_-{\tilde f}[d] & Y^{\prime}\ar^-{f}[d]\\
X\ar^-{g}[r] & Y, }
\end{equation*}
with projective equidimensional morphism $f$ (and, therefore, $\tilde f$),
the relation\footnote{
Here and below $g_*$ {\it et cetera} denote morphisms obtained from the induced morphism $M(g)$ in the category $\mathfrak M$
after application of necessary shift and Tate twist.}
 $\tilde g_*\tilde f^!=f^!g_*$ holds in $\mathfrak M$.
\item {\bf Consistency axiom.} Let $F\colon X\times Z\to Y\times Z$ be a morphism $f\times\id\in\Mor\mathfrak M$ for
some $f\colon X\to Y$ for which the transfer is well-defined. Then, the
 relation $F^!=f^!\otimes 1$ holds in the category $\mathfrak M$ for all such $f,F$.
\end{itemize}

For a given theory of motives $\bf M$ let us define homology and cohomology groups in the following way. Set
$$\HM^{nm}(X):=\Hom_\mathfrak M(M(X),\Z(m)[n])$$
and
$$\HM_{nm}(X):=\Hom_\mathfrak M(\Z(m)[n],M(X)).$$

For us, the most important example of the described construction is Voevodsky's motives.
\begin{example}\label{Voevodsky}
One sets $\mathfrak M=\mathbf{DM^-(k)}$ and the functor $\bf M$ to be the corresponding motive functor from~\cite{SV,Vo2}.
Then $\bf M$ happens to be a functor with transfers for projective morphisms and our list of  axioms is fulfilled.
The Cancelation axiom was checked in this context in~\cite{Vo1}. Construction of the transfer and verification of other axioms
belong to the base properties of Voevodsky's motives (see, for example~\cite[Section 4]{SV}).
\end{example}

\begin{example}\label{classic}
Changing, in our settings, $\Sm$ with a category of  $CW$-complexes and taking the derived
homotopy category of singular complexes with $\Z_2$-coefficients as  $\mathfrak M$,
we obtain, as one can easily see, the standard definitions of singular (co-)homology.
To make the indexes consistent, it suffices to put $M(i)[j]:=M[j-i]$, where on the right-hand-side
the usual triangulated category shift is employed. Alteratively, one can just rewrite all the constructions
with a single index. The role of projective morphisms is played in this case by
proper (in the topological sense) maps. The transfer construction can be found in almost any algebraic topology textbook.
Direct verification shows that all the axioms hold in the considered case. As a result we obtain a proof of
the classical Poincar\'e duality theorem in the spirit of Dold--Puppe's paper~\cite{DP}.

\end{example}

\section*{Statements and Proofs}
The purpose of the current paper is to prove the duality theorem for orientable motives. Namely, we establish the following statement.
\begin{theorem}\label{main}
For every orientable theory of motives  $\bf M$ and arbitrary varieties  $X,Y\in\Sm$ with projective equidimensional $X$ there is the canonical
isomorphism of abelian groups:
$$\Hom(M(Y)(i)[j],M(X))\simeq \Hom(M(Y)\otimes M(X),\Z(d-i)[2d-j]),$$
contravariant with respect to $Y$. Here $d=\dim X$.
\end{theorem}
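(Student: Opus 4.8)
The plan is to construct the isomorphism by exhibiting, for a projective equidimensional $X$ with $d=\dim X$, a ``diagonal'' class and a ``collapse'' (trace) class that together play the role of evaluation and coevaluation in a duality between $M(X)$ and $M(X)(d)[2d]$, and then to feed an arbitrary $Y$ through this duality. Concretely, since $X$ is projective equidimensional, the structure morphism $p\colon X\to\pt$ is projective equidimensional of codimension $-d$, so the Transfer axiom supplies $p^!\colon\Z=M(\pt)\to M(X)(-d)[-2d]$; twisting and shifting (using the Cancelation axiom to move twists around freely) this gives a morphism $\eta\colon\Z(d)[2d]\to M(X)$, a kind of fundamental-class map. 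Dually, the diagonal $\Delta\colon X\to X\times X$ is a closed embedding of smooth varieties, hence projective equidimensional of codimension $d$; the Transfer axiom gives $\Delta^!\colon M(X\times X)\to M(X)(d)[2d]$, and composing with the canonical identification $M(X\times X)\simeq M(X)\otimes M(X)$ yields $\varepsilon\colon M(X)\otimes M(X)\to M(X)(d)[2d]\simeq \Z(d)[2d]\otimes M(X)$... more precisely I want $\varepsilon\colon M(X)\otimes M(X)\to \Z(d)[2d]$, obtained from $\Delta^!$ followed by $p_*$ (push-forward along $X\to\pt$), i.e.\ the class of the diagonal cycle. These $\eta$ and $\varepsilon$ are the unit and counit candidates.

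The heart of the argument is to verify the two triangle identities, i.e.\ that the composites
\[
M(X)\xrightarrow{\ \eta\otimes 1\ }M(X)\otimes M(X)\otimes M(X)...
\]
(suitably twisted) reduce to the identity, and likewise on the other side. This is exactly where the geometry enters: the relevant composite, after unwinding the tensor identifications, is a composite of transfers $\Delta^!$ and $p^!$ fitting into a transversal square
\begin{equation*}
\xymatrix{
X\ar^-{\id}[r]\ar_-{\id}[d] & X\ar^-{\Delta}[d]\\
X\ar^-{\Delta}[r] & X\times X, }
\end{equation*}
so that the Base-change axiom, together with the functoriality $(fg)^!=g^!f^!$ and $f^!(\id)=\id$ from the Transfer axiom, collapses the composite to $\id_{M(X)}$ up to the canonical twist isomorphisms. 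The Consistency axiom is what lets me form the external product transfers $\Delta^!\otimes\id$ and relate them to honest transfers of product morphisms $\Delta\times\id$. I expect the bookkeeping of the second index (the Tate twist) and the placement of the $[2d]$ shifts to be the most error-prone part, but conceptually it is forced: at the level of the single-indexed Example~\ref{classic} this is precisely the Dold--Puppe proof that a closed manifold is dualizable with dual a shift of itself.

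Granting the duality $M(X)^\vee\simeq M(X)(-d)[-2d]$ (equivalently, that $\eta,\varepsilon$ satisfy the triangle identities), the theorem follows by a formal computation in any tensor category:
\[
\Hom(M(Y)(i)[j],M(X))\simeq\Hom(M(Y)(i)[j]\otimes M(X)^\vee,\triv)\simeq\Hom(M(Y)\otimes M(X),\Z(d-i)[2d-j]),
\]
where the first isomorphism is the adjunction coming from dualizability of $M(X)$ and the second substitutes $M(X)^\vee\simeq\Z(-d)[-2d]\otimes M(X)$... rather $M(X)^\vee\simeq M(X)(-d)[-2d]$ and rearranges twists and shifts using the compatibility $A[q]\otimes B=A\otimes B[q]$ and invertibility of $\Z(1)$, with the Cancelation axiom guaranteeing the twisted Hom-groups agree. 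Contravariance in $Y$ is immediate since every step is induced by precomposition with $M(g)$ for $g\colon Y'\to Y$, and naturality of the adjunction and of the twist isomorphisms makes the square commute. The main obstacle, then, is not the formal part but the verification of the triangle identities via the base-change and consistency axioms; once the correct transversal squares are identified, each identity reduces to $\Delta^!$ and $p^!$ bookkeeping plus the functoriality of transfers.
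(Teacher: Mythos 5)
Your overall strategy coincides with the paper's: exhibit $M(X)$ as dualizable with dual $M(X)(-d)[-2d]$, taking the evaluation to be $p_*\circ\Delta^!$ and the coevaluation to be built from $p^!$ and $\Delta_*$, verify the two triangle identities from the transfer, base-change and consistency axioms, and then deduce the Hom-group isomorphism formally (the paper does not invoke abstract dualizability but writes out the two mutually inverse maps $\mathcal D_\bullet$, $\mathcal D^\bullet$ and checks the composites via two diagrams --- the same computation). However, your proposal fails at exactly the step you yourself single out as the heart of the argument. The square you propose to feed into the Base-change axiom,
$$\xymatrix{X\ar[r]^{\id}\ar[d]_{\id}&X\ar[d]^{\Delta}\\ X\ar[r]_-{\Delta}&X\times X,}$$
is Cartesian but \emph{not} transversal: its two parallel sides $\id$ and $\Delta$ have codimensions $0$ and $d$, so the two sides of the would-be base-change relation $\id_*\id^!=\Delta^!\Delta_*$ do not even lie in the same Hom-group (one is an endomorphism of $M(X)$, the other has target $M(X)(d)[2d]$), and the identity $\Delta^!\Delta_*=\id$ it would produce is the false statement that the self-intersection of the diagonal is trivial (this composite is multiplication by the Euler class of $T_X$).

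The point is that in the zigzag composite the transfer and the pushforward act on \emph{different} tensor factors: after the Consistency axiom identifies $\Delta_*\otimes 1$ and $1\otimes\Delta^!$ with the maps induced by $\Delta\times\id$ and $\id\times\Delta$, the composite to be simplified is $(\id\times\Delta)^!\circ(\Delta\times\id)_*$, not $\Delta^!\circ\Delta_*$. The relevant transversal square is therefore the one the paper uses,
$$\xymatrix{X\ar[r]^-{\Delta}\ar[d]_{\Delta}&X\times X\ar[d]^{\id\times\Delta}\\ X\times X\ar[r]_-{\Delta\times\id}&X\times X\times X,}$$
in which the two partial diagonals (both projective of codimension $d$) meet transversally along the small diagonal; base change here gives $(\id\times\Delta)^!(\Delta\times\id)_*=\Delta_*\Delta^!$, and the final collapse to the identity comes not from base change but from functoriality of transfers applied to $(\id\times p)\circ\Delta=\id_X$, i.e.\ $\Delta^!\circ(1\otimes p^!)=\id$. (Your unit is also misstated: as written $\eta$ is merely a twist of $p^!$, whereas the coevaluation must be $\Delta_*\circ p^!\colon\Z\to M(X)\otimes M(X)(-d)[-2d]$ --- this is why your first displayed composite does not typecheck.) With the correct square and the correct factorization of the zigzag composites, the rest of your argument goes through and agrees with the paper.
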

As a simple consequence of our statement, we get the following variant of the classical Poincar\'e duality theorem.
\begin{cor}\label{Cor}
For an arbitrary smooth projective equidimensional variety $X$ of dimension $d$ there exists the canonical isomorphism:
$$\HM_{*,*}(X)\simeq \HM^{2d-*,d-*}(X).$$
\end{cor}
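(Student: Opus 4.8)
The plan is to construct the isomorphism by producing, for a projective equidimensional $X$ of dimension $d$, a "diagonal" class and a "trace" (counit) class that make $M(X)(d)[2d]$ behave like a dual of $M(X)$ in $\mathfrak M$, and then to obtain the claimed isomorphism as an instance of the adjunction between an object and its dual. Concretely, let $\Delta\colon X\to X\times X$ be the diagonal, which is a closed (hence projective) equidimensional embedding of codimension $d$, and let $\pi\colon X\to\pt$ be the structural morphism, projective equidimensional of codimension $d$ since $X$ is projective of dimension $d$. Applying the Transfer axiom to $\Delta$ gives $\Delta^!\colon M(X\times X)\to M(X)(d)[2d]$, and since $\bf M$ sends products to tensor products this reads
$$\eta\colon M(X)\otimes M(X)\to M(X)(d)[2d];$$
applying transfer to $\pi$ gives $\pi^!\colon\Z=M(\pt)\to M(X)(d)[2d]$, i.e. a "fundamental class". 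Composing $\pi^!$ suitably (together with the projection $M(X)\to\Z$ induced by $\pi$, and the Cancelation axiom to strip the twist where needed) will produce the unit
$$\varepsilon\colon \Z\to M(X)\otimes M(X)(-d)[-2d].$$
The first real task is to show that $\eta$ and $\varepsilon$ satisfy the triangle (zig-zag) identities, so that $M(X)(d)[2d]$ is genuinely $\otimes$-dual to $M(X)$.

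The key geometric input for the triangle identities is the Base-change axiom applied to the transversal squares built from the diagonal. Specifically one considers the square expressing $(\Delta\times\id_X)\circ\Delta = (\id_X\times\Delta)\circ\Delta$ on $X$, together with the square relating $\Delta$ and $\pi$ via the isomorphisms $\pt\times X\cong X\cong X\times\pt$; feeding these into Base-change, and using the Consistency axiom to compute the transfer of a morphism of the form $f\times\id$ as $f^!\otimes 1$, one reduces each composite in the zig-zag to the identity after applying functoriality of transfers ($\Delta^!$ is a section-type map and the two projections $X\times X\to X$ pull back $\Delta$ to $\id_X$). This is the step I expect to be the main obstacle: one must set up each square carefully, check transversality, and track the Tate twists and shifts through every application of Cancelation so that the indices match — the bookkeeping of $(d)[2d]$ versus $(-d)[-2d]$ is where errors naturally creep in. Once duality is established, for any $Y$ one has the natural chain of isomorphisms
$$\Hom(M(Y)(i)[j],M(X))\simeq\Hom\bigl(M(Y)(i)[j]\otimes M(X)(d)[2d]^{\vee},\triv\bigr)\simeq\Hom\bigl(M(Y)\otimes M(X),\Z(d-i)[2d-j]\bigr),$$
where the first step is the dual-object adjunction and the second uses $M(X)(d)[2d]^\vee \simeq M(X)(-d)[-2d]$ together with Cancelation to move twists and shifts across, and the compatibility $A[q]\otimes B = A\otimes B[q]$ of the tensor structure with the shift. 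Contravariance in $Y$ is immediate since $\Hom(-,\,\cdot\,)$ is contravariant in its first argument and $M$ is a contravariant functor, so all maps in the chain are natural in $M(Y)$.

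For Corollary~\ref{Cor}, one simply specializes Theorem~\ref{main} to $Y=\pt$, so that $M(Y)=\Z$ and $M(Y)\otimes M(X)\simeq M(X)$, and runs the indices over all bidegrees: with $j$ replaced by $n$ and $i$ by $m$, the left-hand side $\Hom(\Z(m)[n],M(X))$ is by definition $\HM_{n,m}(X)$, while the right-hand side $\Hom(M(X),\Z(d-m)[2d-n])$ is by definition $\HM^{2d-n,\,d-m}(X)$; letting $(n,m)$ range over all pairs gives the asserted isomorphism $\HM_{*,*}(X)\simeq\HM^{2d-*,\,d-*}(X)$, canonical because it is induced by the canonical diagonal and fundamental classes. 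No further work is needed here beyond unwinding the definitions of $\HM_{*,*}$ and $\HM^{*,*}$.
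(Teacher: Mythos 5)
Your derivation of the corollary is exactly the paper's proof: specialize Theorem~\ref{main} to $Y=\pt$, use $M(\pt)=\Z$ and $\Z\otimes M(X)\simeq M(X)$, and unwind the definitions of $\HM_{*,*}$ and $\HM^{*,*}$; your index bookkeeping ($\HM_{n,m}(X)\simeq\HM^{2d-n,\,d-m}(X)$) is correct. The bulk of your text is really a proof plan for Theorem~\ref{main} itself, which the paper takes as already established at this point; your zig-zag/dualizable-object formulation is an equivalent repackaging of the paper's explicit slant-product maps $\mathcal D_\bullet$ and $\mathcal D^\bullet$ and of the two commutative diagrams (whose marked squares are precisely your base-change-plus-consistency step for the diagonal) showing they are mutually inverse.
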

\begin{proof}
Set $Y=\pt$ in the theorem statement.
\end{proof}
Within the next paragraph we have to assume that the category $\mathfrak M$ under consideration admits
inner $\text{\it \underline{Hom}}$-objects, {\it i.e.} for every variety $X\in\Sm$ the tensor product functor
$M(X)\otimes -\colon \mathfrak M\to\mathfrak M$ has right adjoint.

For a variety $X$, satisfying the hypothesis of Corollary~\ref{Cor},
consider the canonical morphism:
$$\xymatrix@1{M(X)\otimes M(X)\ar@{=}[r]&M(X\times X)\ar^{\Delta^!}[r]&M(X)(d)[2d]\ar^(0.6){p_*}[r]&\Z(d)[2d]}$$
and call it {\it motivic cofundamental class} (see the details below).
This morphism determines the canonical duality map: $$M(X)\to\text{\it \underline{Hom}}(M(X),\Z(d)[2d]).$$
Then the following statement holds.
\begin{cor}\label{Dual} Let $\bf M$ be the theory of motives considered above in Example~\ref{Voevodsky}.
Then for every smooth projective equidimensional variety $X$ the constructed duality morphism
$$\psi\colon M(X)\to\text{\it \underline{Hom}}(M(X),\Z(d)[2d])$$
is an isomorphism.
\end{cor}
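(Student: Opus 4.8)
The plan is to deduce Corollary~\ref{Dual} from Theorem~\ref{main}. Using the defining adjunction of $\underline{\Hom}$, I would turn the abstract isomorphism of Theorem~\ref{main} into a natural isomorphism between the two functors $Y\mapsto\Hom(M(Y)(i)[j],M(X))$ and $Y\mapsto\Hom(M(Y)(i)[j],\underline{\Hom}(M(X),\Z(d)[2d]))$ on $\Sm$, then recognize that this isomorphism is given by composition with $\psi$, and finally conclude by a Yoneda/generation argument using that the objects $M(Y)(i)[j]$ generate $\mathbf{DM^-_{gm}(k)}$.

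\textbf{Step 1 (assemble a natural isomorphism).} Fix $X$ smooth projective equidimensional of dimension $d$. For every $Y\in\Sm$ and all $i,j\in\Z$, invertibility of $\Z(1)$ and of the shift give $\Hom(M(Y)\otimes M(X),\Z(d-i)[2d-j])\simeq\Hom\big((M(Y)(i)[j])\otimes M(X),\Z(d)[2d]\big)$, and the adjunction between $M(X)\otimes-$ and $\underline{\Hom}(M(X),-)$ identifies the right-hand side with $\Hom\big(M(Y)(i)[j],\underline{\Hom}(M(X),\Z(d)[2d])\big)$. Composing these with the isomorphism of Theorem~\ref{main} I obtain, for every $Y\in\Sm$ and all $i,j\in\Z$, a natural (contravariant in $Y$) isomorphism of abelian groups
$$\Phi_{Y,i,j}\colon\ \Hom\big(M(Y)(i)[j],M(X)\big)\ \xrightarrow{\ \sim\ }\ \Hom\big(M(Y)(i)[j],\,\underline{\Hom}(M(X),\Z(d)[2d])\big).$$

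\textbf{Step 2 (identify $\Phi$ with $\psi_\ast$).} Here I would inspect the construction of the isomorphism in Theorem~\ref{main} and check that it sends $\varphi\in\Hom(M(Y)(i)[j],M(X))$ to the morphism $M(Y)\otimes M(X)\to\Z(d-i)[2d-j]$ which, via the invertibility used in Step~1, corresponds to the composite
$$(M(Y)(i)[j])\otimes M(X)\ \xrightarrow{\ \varphi\otimes\id_{M(X)}\ }\ M(X)\otimes M(X)\ \xrightarrow{\ \Delta^!\ }\ M(X)(d)[2d]\ \xrightarrow{\ p_\ast\ }\ \Z(d)[2d]$$
— in other words, that Theorem~\ref{main}'s isomorphism is cap-product-like, manufactured from the diagonal transfer $\Delta^!$ and the structure morphism $p$. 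Granting this, since $\psi$ is by construction the morphism adjoint to the motivic cofundamental class $p_\ast\circ\Delta^!$, and since the adjoint of $p_\ast\circ\Delta^!\circ(\varphi\otimes\id_{M(X)})$ is $\psi\circ\varphi$ (naturality of the $\underline{\Hom}$-adjunction in the first variable), the composite $\Phi_{Y,i,j}$ built in Step~1 is precisely $\varphi\mapsto\psi\circ\varphi$. As a consistency check, taking $Y=X$, $i=j=0$, $\varphi=\id_{M(X)}$ recovers the cofundamental class, hence $\psi$ itself. Making the first sentence of this step rigorous — reading the explicit cap-product description out of the proof of Theorem~\ref{main}, not out of its statement — is where I expect the main difficulty to lie.

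\textbf{Step 3 (conclude by generation).} With $\Phi_{Y,i,j}=\psi_\ast$ in hand, $\psi$ induces an isomorphism on $\Hom\big(M(Y)(i)[j],-\big)$ for every $Y\in\Sm$ and all $i,j\in\Z$. For $X$ smooth projective the motive $M(X)$ is strongly dualizable in $\mathbf{DM^-_{gm}(k)}$, so both $M(X)$ and $\underline{\Hom}(M(X),\Z(d)[2d])$ lie in $\mathbf{DM^-_{gm}(k)}$ and the inner Hom may be computed there; in particular the mapping cone $C$ of $\psi$ is an object of $\mathbf{DM^-_{gm}(k)}$. Since the objects $M(Y)(i)[j]$ with $Y\in\Sm$ generate $\mathbf{DM^-_{gm}(k)}$ as a triangulated category (the motives of smooth varieties generate the effective part, and Tate twists supply the rest), and $\Hom\big(M(Y)(i)[j],C\big)=0$ for all of them, a standard dévissage — equivalently, the five-lemma applied along the triangles expressing $C$ in terms of the generators — forces $C=0$. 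Hence $\psi$ is an isomorphism.
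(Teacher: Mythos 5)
Your proposal is correct and follows essentially the same route as the paper: the paper also deduces the corollary by observing that Theorem~\ref{main} forces $\Hom(M(Y)(p)[q],\mathrm{Cone}(\psi))=0$ for all $Y,p,q$ and then invokes generation of $\mathbf{DM^-}$ by such objects to kill the cone. The identification you isolate in Step~2 as the main difficulty is in fact immediate from the paper's proof of Theorem~\ref{main}, where the isomorphism $\mathcal D_\bullet$ is defined explicitly by $a\mapsto p_*\circ\Delta^!\circ(a\otimes 1)$, i.e.\ exactly the cap-product form you conjectured.
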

\begin{proof}
Complete the morphism $\psi$ up to a distinguished triangle in the
category $\mathfrak M$ and denote the third vertex of the triangle
by $\text{\rm Cone}(\psi)$. As it follows from Theorem~\ref{main},
the relation $\Hom(M(Y)(p)[q],\text{\rm Cone}(\psi))=0$
holds for every smooth irreducible variety $Y$ and arbitrary
integers $p,q$. Because the category $\mathfrak M$ is weakly generated by
motives of such a kind, one can see that the object $\text{\rm Cone}(\psi)$
is isomorphic to the zero-object. Triangulated category axioms
easily imply that the morphism with zero cone is an
isomorphism.
\end{proof}

\noindent {\it Proof of Theorem~\ref{main}.}
In order to construct isomorphisms mentioned in the Theorem, one needs the following ingredients.
First of all, denote by $1\in\HM^{0,0}(\pt)=\HM_{0,0}(\pt)$ the element of
(co-)homology group of the point, corresponding to the identity morphism $\id\colon\pt\to\pt$.
Let us note that for a morphism of varieties $f\colon X\to Y$ the morphism $M(f)$  induces the natural maps
$f_*\colon\HM_{*,*}(X)\to\HM_{*,*}(Y)$ in motivic homology and $f^*\colon\HM^{*,*}(Y)\to\HM^{*,*}(X)$
respectively, in cohomology.
In case then $f$ is projective of codimension $d$, motivic transfer map $f^!$ induces
corresponding transfers
\begin{equation*}
f_!\colon\HM^{*,*}(X)\to\HM^{*+2d,*+d}(Y)\hskip 0.3cm\text{and}\hskip 0.3cm
f^!\colon\HM_{*,*}(Y)\to\HM_{*-2d,*-d}(X)
\end{equation*}
in (co-)homology.
For a variety $X$ of dimension $d$ consider the diagonal and projection morphisms:
$$
X\times X\overset\Delta\leftarrow X\overset p\rightarrow\pt
$$
and call the elements  $$[X]_*=\Delta_*p^!(1)\in\HM_{2d,d}(X\times X)$$
 and $$[X]^*=\Delta_!p^*(1)\in\HM^{2d,d}(X\times X)$$
{\it fundamental} and {\it cofundamental classes} of $X$, correspondingly.
Let us also define slant-products
$$/\colon \HM^{i,j}(X\times Y)\otimes\HM_{m,n}(Y)\to \HM^{i-m,j-n}(X)$$ and
$$\backslash\colon \HM_{m,n}(X\times Y)\otimes\HM^{i,j}(Y)\to \HM_{m-i,n-j}(X)$$
in the following way. For elements $\alpha\in \HM^{i,j}(X\times Y)$ and $a\in \HM_{m,n}(Y)$ set:
$$ \alpha/a: M(X)(m)=M(X)\otimes\Z(m)\overset{1\otimes a}\to M(X)\otimes M(Y)[-n]\overset\alpha\to \Z(i)[j-n]$$
for the first product, and symmetrically:
$$ \beta\backslash b:\Z(m)[n-j]\overset b\to M(X)\otimes M(Y)[-j]\overset{1\otimes \beta}\to M(X)\otimes\Z(i)=M(X)(i)$$
 for the second one, provided that $b\in \HM_{m,n}(X\times Y)$ and $\beta\in \HM^{i,j}(Y)$.
 (Here and below we implicitly use Cancelation axiom.)
Set Poincar\'e Duality homomorphisms for the case of Corollary~\ref{Cor}, letting:
$$\mathcal D_\bullet(-)=[X]^*/-\colon \HM_{*,*}(X)\to \HM^{2d-*,d-*}(X)$$
and
$$\mathcal D^\bullet(-)=-\backslash [X]_*\colon \HM^{*,*}(X)\to \HM_{2d-*,d-*}(X),$$
verbatim as it was done in~\cite{PY}.

Then, consider a natural extension of these homomorphisms to the general case of the theorem.
Denote, for brevity, the motive $M(Y)(i)[j]$ by $\mathcal Y$, and the index shift and twist $(d)[2d]$  by $\{d\}$.
Construct a map
$$\mathcal D_\bullet\colon \Hom(\mathcal Y,-)\to\Hom(\mathcal Y\otimes -,\Z\{d\})$$
as follows.
For a morphism $a\in \Hom(\mathcal Y,M(X))$ give $D_\bullet(a)$ by the formula:
$$\mathcal Y\otimes M(X)\overset{a\otimes 1}\to M(X)\otimes M(X)\overset{\Delta^!}\to M(X)\{d\}\overset{p_*}\to \Z\{d\}.$$
The inverse map
$$\mathcal D^\bullet\colon\Hom(\mathcal Y\otimes -,\Z\{d\})\to \Hom(\mathcal Y,-)$$
 is given for a morphism $\alpha\in \Hom(\mathcal Y\otimes M(X),\Z\{d\})$ as:
{
$$\mathcal Y\overset{1\otimes p^!}\to \mathcal Y\otimes M(X)\{-d\}\overset{1\otimes \Delta_*}\to \mathcal Y\otimes M(X)
\otimes M(X)\{-d\}\overset{\alpha\otimes 1}\to \Z\otimes M(X).$$}
The constructed maps are obviously contravariant with respect to the variable $\mathcal Y$.
Theorem~\ref{main} now results from the Cancelation axiom and commutativity of two diagrams below\footnote{In the consequent
diagrams
$M(X)^{\otimes n}$ denotes $n$-fold tensor product and $\Delta^i$ is a diagonal morphism applied to
the $i$-th factor.}:
{
$$
\xymatrix{\mathcal Y\otimes\Z\otimes M(X)\ar_{1\otimes p^!\otimes 1}[d]\ar^{\id}@{=}[dr]\\
\ar@{}[dr]|{\blacksquare}\mathcal Y\otimes M(X)^{\otimes 2}\{-d\}\ar_{\Delta^3_*}[d]\ar^(0.55){1\otimes \Delta^!}[r] &
 \mathcal Y\otimes M(X)\ar_{\Delta_*}[d]\ar^{\id}@{=}[dr]\\
\mathcal Y\otimes M(X)^{\otimes 3}\{-d\}\ar^(0.55){1\otimes 1\otimes\Delta^!}[r]\ar_{\alpha\otimes1\otimes1}[d] &
\mathcal Y\otimes M(X)^{\otimes 2}\ar^(0.55){1\otimes1\otimes p_*}[r]\ar_{\alpha\otimes1}[d]& \mathcal Y\otimes M(X)\ar_\alpha[d] \\
\Z\otimes M(X)^{\otimes 2}\ar^{1\otimes\Delta^!}[r]& \Z\otimes M(X)\{d\}\ar^(0.65){1\otimes p_*}[r] & \Z\{d\}}$$}
and
{
$$\xymatrix{\mathcal Y\ar^-a[r]\ar_{p^!}[d]& M(X)\otimes\Z\ar_{1\otimes p^!}[d]\ar^{\id}@{=}[dr]\\
\mathcal Y\otimes M(X)\{-d\}\ar_{\Delta_*}[d]\ar^{a\otimes 1}[r] & \ar@{}[dr]|{\blacksquare}M(X)^{\otimes 2}\{-d\}\ar^(0.6){\Delta^!}[r]\ar_{\Delta^2_*}[d]&
M(X)\ar_{\Delta_*}[d]\ar^{\id}@{=}[rd]\\
\mathcal Y\otimes M(X)^{\otimes 2}\{-d\}\ar^(0.55){a\otimes 1\otimes 1}[r]&
M(X)^{\otimes 3}\{-d\}\ar^(0.55){\Delta^!\otimes1}[r]& M(X)^{\otimes 2}\ar^{p_*}[r]& M(X)}.$$}
The squares, marked by $\blacksquare$ signs in both the diagrams,
correspond to the Cartesian transversal diagram of varieties
$$\xymatrix{X\times X\ar_{\Delta^2}[d]&X\ar^\Delta[d]\ar_-\Delta[l]\\
X\times X\times X&X\times X,\ar^-{\Delta^1}[l]}
$$
and, therefore, are commutative due to base-change and consistence axioms.\qed

Finally, let us note that as soon as Poincar\'e duality isomorphisms are been fixed, one can uniquely reconstruct the
transfer maps in\\ {(co-)homology} exactly in the same way as it happens in the classical algebraic topology.
Namely, the following statement holds.
\begin{prop}
For projective equidimensional varieties $X,Y\in\Sm$ and a morphism $f\colon X\to Y$, one has:
\begin{equation*}
f_!=\mathcal D_\bullet^Y f_* \mathcal D^\bullet_X\hskip 1cm\text{and}\hskip 1cm
f^!=\mathcal D^\bullet_X f^* \mathcal D_\bullet^Y.
\end{equation*}
Here $\mathcal D_\bullet^X$ and $\mathcal D_\bullet^Y$ denote Poincar\'e duality isomorphisms
applied to varieties $X$ and $Y$, respectively.
\end{prop}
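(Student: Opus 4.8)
The plan is to deduce both equalities from the Base-change and Consistency axioms, by the same kind of diagram chase that proves Theorem~\ref{main}. Throughout I take $f$ to be projective equidimensional of codimension $c=e-d$, where $d=\dim X$ and $e=\dim Y$, so that $f^!$ and $f_!$ are defined. By Theorem~\ref{main} (specialised to $Y=\pt$) the maps $\mathcal D_\bullet^X$ and $\mathcal D^\bullet_X$ are mutually inverse isomorphisms between $\HM_{*,*}(X)$ and $\HM^{*,*}(X)$, and likewise for $Y$; hence the two asserted identities are equivalent, respectively, to
$$\mathcal D^\bullet_Y\circ f_!=f_*\circ\mathcal D^\bullet_X\colon\HM^{*,*}(X)\to\HM_{*,*}(Y)\qquad\text{and}\qquad\mathcal D_\bullet^X\circ f^!=f^*\circ\mathcal D_\bullet^Y\colon\HM_{*,*}(Y)\to\HM^{*,*}(X),$$
and it is these that I would prove.

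For the first, I would unwind the definitions of the slant product $\backslash$, of $\mathcal D^\bullet$, and of the fundamental classes $[X]_*=\Delta_{X*}p_X^!(1)$, $[Y]_*=\Delta_{Y*}p_Y^!(1)$. Since a cohomology class $\alpha$ enters $\mathcal D^\bullet_Y(f_!\alpha)$ and $f_*(\mathcal D^\bullet_X\alpha)$ only through an outermost factor $1\otimes\alpha$, and after cancelling a Tate twist by the Cancelation axiom, the equality $\mathcal D^\bullet_Y(f_!\alpha)=f_*(\mathcal D^\bullet_X\alpha)$ (for all $\alpha$) reduces to the single relation
$$(1_{M(Y)}\otimes f^!)\circ M(\Delta_Y)\circ p_Y^!=(M(f)\otimes 1_{M(X)})\circ M(\Delta_X)\circ p_X^!$$
of morphisms $\Z(d)[2d]\to M(Y)\otimes M(X)=M(Y\times X)$, i.e.\ of elements of $\HM_{2d,d}(Y\times X)$. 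By the Consistency axiom (together with the symmetry of $\otimes$), $1_{M(Y)}\otimes f^!$ is the transfer $(\id\times f)^!$ of $\id\times f\colon Y\times X\to Y\times Y$ (identity on the first factor). The Cartesian square
$$\xymatrix{
X\ar^-{\gamma}[r]\ar_-{f}[d] & Y\times X\ar^-{\id\times f}[d]\\
Y\ar^-{\Delta_Y}[r] & Y\times Y, }$$
where $\gamma(x)=(f(x),x)$, is transversal — all four varieties are smooth and the fibre product, which is $X$, has the expected dimension $d$ — and its vertical arrows $f$ and $\id\times f$ are the projective equidimensional ones (of codimension $c$); so the Base-change axiom gives $(\id\times f)^!\circ M(\Delta_Y)=M(\gamma)\circ f^!$. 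Combined with functoriality of transfers, $f^!\circ p_Y^!=(p_Y\circ f)^!=p_X^!$, this rewrites the left-hand side of the relation as $M(\gamma)\circ p_X^!$. The right-hand side equals $M\big((f\times\id)\circ\Delta_X\big)\circ p_X^!=M(\gamma)\circ p_X^!$ as well, since $M(f)\otimes 1_{M(X)}=M(f\times\id)$ and $(f\times\id)\circ\Delta_X=\gamma$. Hence the two sides coincide, which proves the first identity.

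The second identity is obtained by the dual computation: one replaces $[X]_*,[Y]_*,\backslash$ by the cofundamental classes $[X]^*=\Delta_{X!}p_X^*(1)$, $[Y]^*=\Delta_{Y!}p_Y^*(1)$ and the slant product $/$, and uses the functoriality relation $\Delta_X^!\circ(\id\times f)^!=\big((\id\times f)\circ\Delta_X\big)^!=\Gamma_f^!$, where now $\Gamma_f\colon X\to X\times Y$, $x\mapsto(x,f(x))$, is the graph of $f$ and $\id\times f\colon X\times X\to X\times Y$ (identity on the first factor). Unwinding $\mathcal D_\bullet^X\circ f^!$ and $f^*\circ\mathcal D_\bullet^Y$, and again invoking the Consistency axiom to write $1_{M(X)}\otimes f^!=(\id\times f)^!$, the equality $\mathcal D_\bullet^X\circ f^!=f^*\circ\mathcal D_\bullet^Y$ reduces to
$$M(f)\circ\Gamma_f^!=\Delta_Y^!\circ M(f\times\id)\colon M(X\times Y)\to M(Y)(e)[2e],$$
which is precisely the instance of the Base-change axiom attached to the transversal Cartesian square
$$\xymatrix{
X\ar^-{f}[r]\ar_-{\Gamma_f}[d] & Y\ar^-{\Delta_Y}[d]\\
X\times Y\ar^-{f\times\id}[r] & Y\times Y, }$$
whose projective equidimensional arrows are the vertical ones $\Gamma_f$ and $\Delta_Y$, both of codimension $e$. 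This completes the proof.

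I expect the only points requiring genuine care — as opposed to formal manipulation — to be the two transversality verifications for the ``graph squares'' above, i.e.\ checking that they meet~\cite[Definition~A.1]{PY}; each comes down to smoothness of the schemes involved together with a dimension count. The shift-and-twist bookkeeping is handled uniformly by the Cancelation axiom, exactly as in the proof of Theorem~\ref{main}, and everything else is formal from functoriality of transfers and the Consistency and Base-change axioms.
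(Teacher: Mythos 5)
Your proof is correct and follows essentially the same route as the paper: reduce each identity (via the invertibility of the duality maps) to a relation between (co)fundamental classes, and establish that relation by applying the Base-change axiom to the transversal graph square of $f$ together with the Consistency axiom and the functoriality $p_X^!=f^!\circ p_Y^!$. The only differences are cosmetic — you work with the transposed graph $x\mapsto(f(x),x)$ where the paper uses $\Gamma^f\colon x\mapsto(x,f(x))$, and you write out the second identity (reading the same graph square with $\Gamma_f$ and $\Delta_Y$ as the transfer directions), which the paper dismisses as ``similar.''
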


\begin{proof}
The first equality can be easily derived from the relation
 $$f_*(\alpha\backslash [X]_*)=f_!(\alpha)\backslash [Y]_*,$$ which,
in its row, yields from the commutativity of the diagram:
$$\xymatrix{&&M(X)\otimes M(X)\ar^{1\otimes f_*}[d]\ar^(0.55){\alpha\otimes 1}[r]&\Z\otimes M(X)\ar^{1\otimes f_*}[d]\\
\Z\ar^{p_X^!}[r]\ar_-{p_Y^!}[dr]&M(X)\ar@{}[dr]|{\blacksquare}\ar[r]_-{\Gamma^f_*}\ar^-{\Delta^X_*}[ur]&M(X)\otimes M(Y)\ar[r]^(0.55){\alpha\otimes 1}&\Z\otimes M(Y)\\
&M(Y)\ar_-{\Delta^Y_*}[r]\ar^{f^!}[u]&M(Y)\otimes M(Y).\ar_{f^!\otimes 1}[u]}$$
(Here all the shifts and twists are omitted for simplicity.)
The square marked by $\blacksquare$ is induced by the transversal graph diagram of the morphism $f$:
$$\xymatrix{X\ar^-{\Gamma^f}[r]\ar_f[d]&X\times Y\ar^{f\times\id}[d]\\
Y\ar_-\Delta[r]&Y\times Y}$$
Hence, it is commutative.
Commutativity in all other diagram parts is obvious.
The second equality can be proven in a similar way.
\end{proof}
\Russian

\end{document}